%%%%%%%%%%%%%%%%%%%%%%%%%%%%%%%%%%%%%%%%%%%%%%%%%%
%
%	Convexity of parabolic subgroups in Artin groups}
%
%	Standard format	
%
%
%%%%%%%%%%%%%%%%%%%%%%%%%%%%%%%%%%%%%%%%%%%%%%%%%%

 \documentclass[11pt]{amsart}
\usepackage{geometry}   
\geometry{letterpaper}                   % ... or a4paper or a5paper or ... 
\usepackage{setspace}
\usepackage{graphicx}
\usepackage{amssymb, amsthm}
\usepackage{epstopdf}
\DeclareGraphicsRule{.tif}{png}{.png}{`convert #1 `dirname #1`/`basename #1 .tif`.png}

 \newtheorem{theorem}{Theorem}[section]
    
   \newtheorem{lemma}[theorem]{Lemma}
    \newtheorem{proposition}[theorem]{Proposition}

    \theoremstyle{definition}

\def\lg{{\rm lg}} \def\Sal{{\rm Sal}} \def\BSal{\overline{\rm Sal}}
\def\int{{\rm int}}

%%%%%%%%%%

\title{Convexity of parabolic subgroups in Artin groups}

\author{Ruth Charney and Luis Paris}

\thanks {R. Charney was partially supported by NSF grant DMS-1106726}

\begin{document}

%\date{\today}

\begin{abstract}
\noindent
We prove that any standard parabolic subgroup of any Artin group is convex with respect to the standard generating set.
\end{abstract}

\maketitle

%\subject{primary}{20F36}

%%%%%%%%%%

\section{Introduction}

Let $S$ be a finite set. 
A \emph{Coxeter matrix} over $S$ is a square matrix $M=(m_{s,t})_{s,t \in S}$ indexed by the elements of $S$ satisfying  $m_{s,s}=1$ for all $s \in S$, and $m_{s,t} = m_{t,s} \in \{2,3,4, \dots\} \cup \{ \infty\}$ for all $s,t \in S$, $s \neq t$.
The \emph{Coxeter graph} which represents the above Coxeter matrix is the labelled graph $\Gamma = \Gamma(M)$ defined as follows.
(1) $S$ is the set of vertices of $\Gamma$.
(2) Two vertices $s,t \in S$ are joined by an edge if $m_{s,t} \ge 3$.
(3) This edge is labelled by $m_{s,t}$ if $m_{s,t} \ge 4$.

The \emph{Coxeter system} of $\Gamma$ is the pair $(W,S) = (W_\Gamma,S)$, where $S$ is the set of vertices of $\Gamma$, and $W$ is the group defined by the following presentation.
\[
W= \left\langle S\ \left|\ \begin{array}{c}
s^2=1 \text{ for all } s \in S\\
(st)^{m_{s,t}}=1 \text{ for all } s,t \in S,\ s \neq t \text{ and } m_{s,t} \neq \infty
\end{array}\right. \right\rangle\,.
\]
The group $W$ itself is called the \emph{Coxeter group} of $\Gamma$.

If $a,b$ are two letters and $m$ is an integer $\ge 2$, we set 
$$\Pi(a,b:m) = \begin{cases} (ab)^{\frac{m}{2}} &  \textrm{if $m$ is even} \cr
		a(ba)^{\frac{m-1}{2}} & \textrm{if $m$ is odd}
 \end{cases}$$
In other words, $\Pi(a,b:m)$ denotes the word $aba \cdots$ of length $m$.
Let $\Sigma= \{ \sigma_s \mid s \in S\}$ be an abstract set in one-to-one correspondence with $S$.
The \emph{Artin system} of $\Gamma$ is the pair $(A,\Sigma) = (A_\Gamma,\Sigma)$, where $A$ is the group defined by the following presentation.
\[
A = \langle \Sigma \mid \Pi(\sigma_s, \sigma_t: m_{s,t}) = \Pi(\sigma_t, \sigma_s: m_{s,t}) \text{ for all } s,t \in S,\ s \neq t \text{ and } m_{s,t} \neq \infty \rangle\,.
\]
The group $A$ itself is called the \emph{Artin group} of $\Gamma$.
Observe that the map $\Sigma \to S$, $\sigma_s \mapsto s$, induces an epimorphism $\theta: A_\Gamma \to W_\Gamma$.
The kernel of $\theta$ is called the \emph{colored Artin group} of $\Gamma$, and it is denoted by $CA_\Gamma$.

The Coxeter groups were introduced by Tits in his manuscript \cite{Tits1}.
The latter is one of the main sources for the celebrated Bourbaki book ``Groupes et alg\`ebres de Lie, Chapitres IV, V et VI'' \cite{Bourb1}.
They play an important role in many areas such as Lie theory, hyperbolic geometry, and, of course, group theory. Furthermore, there is a quite extensive literature on Coxeter groups. 
We recommend \cite{Davis1} for a detailed study of the subject.

The Artin groups were also introduced by Tits, as extensions of Coxeter groups \cite{Tits2}.
They are involved in several fields (singularities, knot theory, mapping class groups, and so on), and they have been the object of many papers in the last three decades.  Most results, however, involve only special classes of Artin groups, such as spherical type Artin groups (where the corresponding Coxeter group is finite) or right-angled Artin groups (where all $m_{s,t}=2,\infty$).  Artin groups as a whole are still poorly understood.
In particular, it is  not known if they are torsion free, or if they have solvable word problem (see \cite{GodPar1}).
The present paper is an exception to that rule since it concerns all Artin groups.

The flagship example of an Artin group is the braid group $\mathcal B_n$ on $n$ strands.
It is associated to the Coxeter graph $ \mathbb A_{n-1}$ depicted in Figure \ref{An}, and its associated Coxeter group is the symmetric group $\mathfrak S_n$.
\begin{figure}[tbh]\label{An}
\bigskip
\centerline{
\setlength{\unitlength}{0.5cm}
\begin{picture}(11,1.5)
\put(0,1){\includegraphics[width=5.5cm]{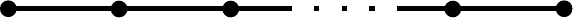}}
\put(0,0.2){\small $s_1$}
\put(2.1,0.2){\small $s_2$}
\put(4.2,0.2){\small $s_3$}
\put(8,0.3){\small $s_{n-2}$}
\put(10.2,0.3){\small $s_{n-1}$}
\end{picture}} 

\bigskip
\caption{The Coxeter graph $\mathbb A_{n-1}$.}
\end{figure}

For $T \subset S$, we denote by $W_T$ the subgroup of $W$ generated by $T$, we denote by $\Gamma_T$ the full subgraph of $\Gamma$ spanned by $T$, we set $\Sigma_T = \{\sigma_s \mid s \in T\}$, and we denote by $A_T$ the subgroup of $A$ generated by $\Sigma_T$.
By \cite{Bourb1}, the pair $(W_T,T)$ is the Coxeter system of $\Gamma_T$, and by \cite{Lek1}, $(A_T,\Sigma_T)$ is the Artin system of $\Gamma_T$.
The group $A_T$ (resp. $W_T$) is called a \emph{standard parabolic subgroup} of $A$ (resp. of $W$).

Let $m$ and $n$ be two positive integers such that $m \le n$.
A fundamental example of a standard parabolic subgroup is the braid group $\mathcal B_m$ embedded in $\mathcal B_n$ via the homomorphism which sends the standard generators of $\mathcal B_m$ to the first $m-1$ standard generators of $\mathcal B_n$.

Let $G$ be a group, and let $S$ be a generating set for $G$.
An \emph{expression} for an element $\alpha \in G$ is a word $\widehat \alpha = s_1^{\varepsilon_1} \cdots s_\ell^{\varepsilon_\ell}$ on $S \sqcup S^{-1}$ which represents $\alpha$.
The \emph{length} of $\alpha$ (with respect to $S$), denoted by $\lg_S(\alpha)$, is the minimal length of an expression for $\alpha$.
A \emph{geodesic} for $\alpha$ is an expression of length $\lg_S(\alpha)$.  Geometrically, this corresponds to a geodesic path from $1$ to $\alpha$ in the Cayley graph of $(G,S)$.

Let $T$ be a subset of $S$, and let $H$ be the subgroup of $G$ generated by $T$.
We say that $H$ is \emph{convex} in $G$ with respect to $S$ if for all $\alpha \in H$ and all geodesic $\widehat \alpha = s_1^{\varepsilon_1} \cdots s_\ell^{\varepsilon_\ell}$ of $\alpha$, we have $s_1, \dots, s_\ell \in T$.
Or equivalently, the Cayley graph of $(H,T)$ is a convex subspace of the Cayley graph of $(G,S)$.  In particular, if $H$ is convex, then $\lg_S(\alpha) = \lg_T(\alpha)$ for all $\alpha \in H$, that is, $H$ is isometrically embedded in $G$.

The following is of importance in the study of Coxeter groups.

\begin{theorem}[Bourbaki \cite{Bourb1}]
Let $\Gamma$ be a Coxeter graph, let $(W,S)$ be its Coxeter system, and let $T$ be a subset of $S$.
Then $W_T$ is a convex subgroup of $W$ with respect to $S$.
\end{theorem}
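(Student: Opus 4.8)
\section*{Proof proposal}

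The plan is to deduce this from the standard combinatorics of the canonical linear representation of $(W,S)$, as developed in \cite{Bourb1}. Recall that $W$ acts faithfully on $V=\bigoplus_{s\in S}\mathbb R\,\alpha_s$ preserving a symmetric bilinear form, each $s\in S$ acting as the reflection along $\alpha_s$; the associated root system is $\Phi=\{w(\alpha_s)\mid w\in W,\ s\in S\}$, and it decomposes as $\Phi=\Phi^+\sqcup\Phi^-$, a root being \emph{positive} when it is a nonnegative combination of the $\alpha_s$ and \emph{negative} when it is a nonpositive one (with $\Phi^-=-\Phi^+$). I would use two classical facts. First, the descent criterion: for $s\in S$ and $w\in W$, one has $\lg_S(sw)<\lg_S(w)$ if and only if $w^{-1}(\alpha_s)\in\Phi^-$. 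Second, the parabolic root subsystem: writing $V_T=\bigoplus_{t\in T}\mathbb R\,\alpha_t$ and $\Phi_T=\Phi\cap V_T$, the subgroup $W_T$ stabilizes $V_T$ and maps $\Phi^+\setminus\Phi_T$ into itself; equivalently, if $w\in W_T$, $\gamma\in\Phi^+$ and $w(\gamma)\in\Phi^-$, then $\gamma\in\Phi_T$.

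Granting these, I would argue by induction on $\ell=\lg_S(w)$ for $w\in W_T$. Since each $s\in S$ is an involution in $W$, any geodesic for $w$ may be written without inverses, say $\widehat w=s_1\cdots s_\ell$ with $s_i\in S$; the goal is to show that each $s_i$ lies in $T$. If $\ell=0$ this is vacuous. Otherwise $s_1w$ admits the expression $s_2\cdots s_\ell$, so $\lg_S(s_1w)<\lg_S(w)$, and the descent criterion gives $w^{-1}(\alpha_{s_1})\in\Phi^-$. Put $\gamma=-w^{-1}(\alpha_{s_1})$; then $\gamma\in\Phi^+$ and $w(\gamma)=-\alpha_{s_1}\in\Phi^-$, so the second fact yields $\gamma\in\Phi_T\subseteq V_T$. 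As $w\in W_T$ preserves $V_T$, we get $\alpha_{s_1}=-w(\gamma)\in V_T$; since $\{\alpha_s\mid s\in S\}$ is a basis of $V$, this forces $s_1\in T$. Hence $s_1w\in W_T$, its geodesic $s_2\cdots s_\ell$ has length $\ell-1$, and the inductive hypothesis gives $s_2,\dots,s_\ell\in T$. Together with $s_1\in T$ this completes the induction.

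The only genuinely non-formal ingredient is the second fact, namely that $W_T$ preserves the set of positive roots lying outside the parabolic subsystem $\Phi_T$; everything else is bookkeeping with the descent criterion. This is precisely where one uses that $T$ is a \emph{standard parabolic} subset rather than an arbitrary subset of $S$: one reduces to the case $w=t\in T$ and observes that the reflection along $\alpha_t$ changes only the $\alpha_t$-coordinate of a root, so a positive root with a strictly positive coordinate at some $s\notin T$ is sent to a root with the same (still strictly positive) coordinate there, hence again positive and again outside $V_T$. An alternative, more combinatorial route avoids the root system and invokes Tits' solution of the word problem: every geodesic word for $w\in W_T$ is obtained from one whose letters all lie in $T$ by a sequence of braid moves, and each braid move visibly preserves the property of having all letters in $T$; but making this fully precise requires the reduction-of-words machinery, so I would favour the root-theoretic argument above.
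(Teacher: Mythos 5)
Your argument is correct and complete. Note, though, that the paper offers no proof of this statement at all: it is quoted as a classical result and attributed to Bourbaki, whose own treatment (Ch.~IV, \S 1, no.~8) is purely combinatorial, resting on the exchange condition to show that every reduced decomposition of an element of $W_T$ has all its letters in $T$. You instead route the proof through the canonical linear representation: the descent criterion $\lg_S(sw)<\lg_S(w)\iff w^{-1}(\alpha_s)\in\Phi^-$ together with the fact that $W_T$ permutes $\Phi^+\setminus\Phi_T$, from which the induction on $\lg_S(w)$ is, as you say, bookkeeping. The key step is handled cleanly: from $w(\gamma)=-\alpha_{s_1}\in\Phi^-$ you correctly conclude $\gamma\in\Phi_T$, and then $\alpha_{s_1}\in V_T$ forces $s_1\in T$ because the simple roots are linearly independent. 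The reduction of the stability of $\Phi^+\setminus\Phi_T$ to the case $w=t\in T$ (where $t$ alters only the $\alpha_t$-coordinate) is also the standard and correct argument. The two routes buy different things: Bourbaki's is self-contained within the combinatorics of $(W,S)$ and yields the stronger statement that the \emph{support} of a reduced word depends only on the element; yours leans on the geometric representation (positivity of roots, faithfulness is not even needed) and generalizes more readily to reflection subgroups that are not standard parabolic. Either is a legitimate proof; nothing is missing.
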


In the present paper we prove that the same result holds for Artin groups.
That is:

\begin{theorem} \label{main}
Let $\Gamma$ be a Coxeter graph, let $(A,\Sigma)$ be its Artin system, and let $T$ be a subset of $S$.
Then $A_T$ is a convex subgroup of $A$ with respect to $\Sigma$.
\end{theorem}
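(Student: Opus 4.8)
The plan begins with the observation that convexity is transitive along a chain $T''\subseteq T'\subseteq S$: if $A_{T'}$ is convex in $A$ and, under van der Lek's identification $A_{T'}=A_{\Gamma_{T'}}$, $A_{T''}$ is convex in $A_{T'}$, then $A_{T''}$ is convex in $A$, because a geodesic of $A$ representing an element of $A_{T''}\subseteq A_{T'}$ uses only letters of $\Sigma_{T'}$, hence is a geodesic of $A_{T'}$, hence uses only letters of $\Sigma_{T''}$. Writing $T=S\setminus\{t_1,\dots,t_r\}$ and peeling off one $t_i$ at a time, it suffices to prove: for every $t\in S$, the subgroup $A_T$ with $T:=S\setminus\{t\}$ is convex in $A$. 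Suppose not, and among all $\alpha\in A_T$ admitting a geodesic expression over $\Sigma\sqcup\Sigma^{-1}$ that involves $\sigma_t^{\pm1}$, choose one with $\lg_\Sigma(\alpha)$ minimal; fix such a geodesic and write it as $w_0\,\sigma_t^{\varepsilon_1}w_1\cdots\sigma_t^{\varepsilon_k}w_k$ with $k\ge1$, each $w_i$ a word over $\Sigma_T\sqcup\Sigma_T^{-1}$ and $\varepsilon_i\in\{\pm1\}$. Since every factor of a geodesic is a geodesic and $w_0,w_k\in A_T$, if $w_0$ (resp. $w_k$) were nonempty then $w_0^{-1}\alpha$ (resp. $\alpha w_k^{-1}$) would be a strictly shorter counterexample; hence $w_0=w_k$ is empty. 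Finally $\alpha\in A_T$ forces $\theta(\alpha)\in W_T$, while $\theta(\sigma_t^{\pm1})=t\notin W_T$ (the only standard parabolic of $W$ containing $t$ is $W_{\{t\}}$, and $\{t\}\not\subseteq T$); so $k\neq1$. Thus we are reduced to deriving a contradiction from a geodesic $\widehat\alpha=\sigma_t^{\varepsilon_1}w_1\sigma_t^{\varepsilon_2}\cdots\sigma_t^{\varepsilon_k}$ with $k\ge2$ representing $\alpha\in A_T$.

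\textbf{The Coxeter-group picture.}
Apply $\theta$: the word $\theta(\widehat\alpha)=t\,\bar w_1\,t\,\bar w_2\cdots t$ over $S$ represents $\bar\alpha:=\theta(\alpha)\in W_T$, where $\bar w_i:=\theta(w_i)\in W_T$. By Bourbaki's theorem a reduced expression of an element of $W_T$ uses no letter outside $T$, so $\theta(\widehat\alpha)$, which contains $t$, is not reduced. The relevant local analysis is short: for $v\in W_T$ one has $\ell(tv)=\ell(vt)=\ell(v)+1$ (as $t$ is neither a left nor a right descent of $v$, since $t\notin\mathrm{supp}(v)\subseteq T$), hence $\ell(tvt)\in\{\ell(v),\ell(v)+2\}$; and an exchange-condition argument shows $\ell(tvt)=\ell(v)$ forces $tvt=v$, i.e. $t$ commutes with $v$, whence by standard properties of Coxeter groups $v\in W_{T\cap t^{\perp}}$ with $t^{\perp}=\{s\in S:m_{s,t}=2\}$. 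Thus the failure of $\theta(\widehat\alpha)$ to be reduced can only be produced either by some $\bar w_i$ commuting with $t$ (so two neighbouring $t$'s can be merged), or by a cancellation between a letter $t$ and a letter of some $\bar w_j$, brought about by braid moves.

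\textbf{Lifting to the Artin group.}
The goal is to turn the non-reducedness of $\theta(\widehat\alpha)$ in $W$ into either a shortening of $\widehat\alpha$ in $A$, contradicting geodesicity, or a decrease in the number of occurrences of $\sigma_t^{\pm1}$, contradicting the minimality arranged in the first paragraph. Here the key distinction is that braid moves in $W$ lift to the defining Artin relations and hence change neither $\alpha$ nor the length of the corresponding $\Sigma$-expression, so the only moves that shorten a word are the relations $s^2\to\emptyset$ of $W$; the plan is to organize the Coxeter reduction so that each such move corresponds in $A$ to a free cancellation $\sigma_s^{\varepsilon}\sigma_s^{-\varepsilon}\to\emptyset$. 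To control this one localizes: the letters involved in such a move lie, after the intervening braid moves, inside a spherical standard parabolic subgroup $A_X$ (an Artin group of finite type), where Garside normal forms give complete control of geodesics and of parabolic subgroups, and one combines this with a description of how $\sigma_t^{\pm1}$ conjugates the standard parabolic subgroups of $A_T$ (the relevant instance of Godelle's results on parabolic subgroups and ribbons) to treat the "commuting" case of the previous paragraph: the block $\sigma_t^{\varepsilon_i}w_i\sigma_t^{\varepsilon_{i+1}}$ is then either cancelled outright, or rewritten so that $\sigma_t^{\varepsilon_i}$ and $\sigma_t^{\varepsilon_{i+1}}$ meet and either cancel (when $\varepsilon_i=-\varepsilon_{i+1}$) or merge to $\sigma_t^{\pm2}$ inside a spherical $A_{X\cup\{t\}}$, from which it can be slid out, in each case contradicting geodesicity or minimality.

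\textbf{The main obstacle.}
Everything in the first two paragraphs — the transitivity reduction, the normalization of the counterexample, and the Coxeter-theoretic bookkeeping — is routine. The entire difficulty is concentrated in the last step: because $\theta$ is very far from injective and the relations $s^2=1$ of $W$ have no lift to $A$, one must guarantee that the cancellation which the Coxeter group $W$ makes available is realizable in $A$ with the correct exponents, and this requires tracking the signs $\varepsilon$ of the letters through the sequence of braid moves that bring them together. Handling this sign bookkeeping — by confining the relevant configurations to spherical standard parabolic subgroups and exploiting Garside theory together with the behaviour of parabolic subgroups under conjugation by the generators $\sigma_t^{\pm1}$ — is the heart of the argument.
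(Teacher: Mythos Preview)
Your proposal is not a proof but an outline that explicitly leaves the decisive step undone. The first two stages---the transitivity reduction to $T=S\setminus\{t\}$, the normalization of a minimal counterexample, and the Coxeter bookkeeping showing that $\theta(\widehat\alpha)$ is not reduced and that $\ell(tvt)=\ell(v)$ for $v\in W_T$ forces $tvt=v$---are fine (modulo the assertion $v\in W_{T\cap t^\perp}$, which is true but deserves a reference). But your own final paragraph says that ``the entire difficulty is concentrated in the last step,'' and you never carry it out. The phrases ``confining the relevant configurations to spherical standard parabolic subgroups'' and ``exploiting Garside theory'' are not an argument: you give no mechanism for producing, from the Coxeter-group non-reducedness, an honest shortening of the \emph{signed} word $\widehat\alpha$ in $A$. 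Tits moves in $W$ do lift to Artin relations, but the cancellations $ss\to 1$ do not, and there is no reason the two letters that eventually meet in $W$ should carry opposite exponents in $A$; nor do you explain how to trap the interaction inside a spherical $A_X$ so that Garside theory applies. Since the word problem for general Artin groups is open, any argument that implicitly assumes one can rewrite $\widehat\alpha$ to a shorter word by local moves is suspect unless the localization is made precise.

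By contrast, the paper avoids all word-combinatorics in $A$. It uses the Godelle--Paris retraction $\pi_T:\Sal(\Gamma)\to\Sal(\Gamma_T)$: the word $\widehat\alpha$ determines a path $\gamma(\widehat\alpha)$ in $\Sal(\Gamma)$, its image $\pi_T(\gamma(\widehat\alpha))$ is an explicit path $\gamma(\widehat\tau)$ with $\widehat\tau\in(\Sigma_T\sqcup\Sigma_T^{-1})^*$ of length $\le\lg(\widehat\alpha)$, and continuity of $\pi_T$ (not any combinatorics) shows $\widehat\tau$ still represents $\alpha$. Equality of lengths then forces, by a short induction on the edge-by-edge description of $\pi_T$, that every letter of $\widehat\alpha$ already lay in $\Sigma_T$. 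The retraction does in one stroke what your ``lifting'' paragraph is trying to arrange by hand.
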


Our original motivation for studying the convexity of parabolic subgroups in Artin groups comes from a question asked us by Arye Juh\'asz.
In a work in preparation \cite{Juhas1}, he provides a solution to the word problem for a certain class of Artin groups, and he proves that these groups are torsion free. 
His proof uses a condition on the groups that he calls the A-S condition, a form of convexity for a certain type of word. It follows immediately from Theorem~1.2 that this condition is satisfied by all Artin groups.
More generally, as for Coxeter groups, the study of Artin groups often goes through the study of their (standard) parabolic subgroups. So any result on these subgroups is likely to be useful in further developments in the theory of Artin groups.

Although Theorem \ref{main} may seem natural, it is a surprise for the experts.
As far as we know, it was not even known for the braid group $\mathcal B_m$ embedded in $\mathcal B_n$,
although the proof in this case is easy (see Proposition \ref{braids} below).  
Theorem  \ref{main}  also comes as a surprise because, in general, the family of standard generators (that is, $\Sigma$) is not the best for studying combinatorial questions on Artin groups.
In the most well understood case, when $A$ is of spherical type, a larger generating set is generally used.
This is called the set of \emph{simple elements} and we denote it by $\mathcal S$. 
It follows from \cite{Charn1} that  $(A_T, \mathcal S_T)$ is isometrically embedded in $(A,\mathcal S)$, but
the image is \emph{not} convex since there are $\mathcal S$-geodesics for elements of $A_T$ whose terms do not belong to $\mathcal S_T \sqcup \mathcal S_T^{-1}$.

%%%%%%%%%%

\section{The proof}

\noindent
As promised, we start with the proof of Theorem  \ref{main}  in the particular case of the braid group $\mathcal B_m$ embedded in $\mathcal B_n$. 
We treat this case separately for two reasons. 
Firstly, because some readers may want to use Theorem  \ref{main}  in that case without necessarily learning all the background on Artin groups needed to prove our theorem.
Secondly, because the proof of Theorem  \ref{main}  is, in a sense, a (non trivial) extension of the proof of Proposition \ref{braids}.
So, the reader may want to keep in mind the proof of Proposition \ref{braids} when reading the proof of Theorem~\ref{main}.

Recall that a \emph{braid} on $n$ strands is an $n$-tuple $\beta=(b_1, \dots, b_n)$ of paths, $b_i: [0,1] \to \mathbb R^3$, such that 
\begin{itemize}
\item[(1)] 
$b_i(0)=(i,0,0)$ for all $i \in \{1, \dots, n\}$, and there is a permutation $w \in \mathfrak S_n$ such that $b_i(1)=(w(i),0,1)$ for all $i \in \{1, \dots, n\}$;
\item[(2)]
$(p_3 \circ b_i)(t) = t$ for all $i \in \{1, \dots, n\}$ and $t \in [0,1]$, where $p_3$ denotes the projection of $\mathbb R^3$ on the third coordinate; 
\item[(3)]
$b_i \cap b_j = \emptyset$ for all $i,j \in \{1, \dots, n\}$, $i \neq j$.
\end{itemize}
The isotopy classes of braids form a group, called the \emph{braid group} on $n$ strands and denoted by $\mathcal B_n$. 
By Artin \cite{Artin1,Artin2}, this group has the following presentation. 
\[
\mathcal B_n = \left\langle \sigma_1, \dots, \sigma_{n-1} \ \left| \begin{array}{c}
\sigma_i \sigma_{i+1} \sigma_i = \sigma_{i+1} \sigma_i \sigma_{i+1} \text{ for } 1 \le i \le n-2\\
\sigma_i \sigma_j = \sigma_j \sigma_i \text{ for } |i-j| \ge 2
\end{array} \right. \right\rangle \,.
\]
In other words, $\mathcal B_n$ is the Artin group associated to the Coxeter graph $\mathbb A_{n-1}$ depicted in Figure \ref{An}.

\begin{proposition} \label{braids}
Let $m,n \in \mathbb N$, $m \le n$.
Then $\mathcal B_m$ is a convex subgroup of $\mathcal B_n$ with respect to $\{\sigma_1, \dots, \sigma_{n-1}\}$.
\end{proposition}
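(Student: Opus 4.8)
The plan is to use the well-known interpretation of braids as automorphisms of the free group, together with the exponent-sum (abelianization) homomorphism. Recall that $\mathcal B_n$ acts faithfully on the free group $F_n = \langle x_1, \dots, x_n\rangle$ by the Artin action: $\sigma_i$ sends $x_i \mapsto x_i x_{i+1} x_i^{-1}$, $x_{i+1}\mapsto x_i$, and fixes all other generators. Under this action, the subgroup $\mathcal B_m \le \mathcal B_n$ (generated by $\sigma_1, \dots, \sigma_{m-1}$) is characterized precisely as the set of braids fixing each of $x_{m+1}, \dots, x_n$ and preserving the subgroup $\langle x_1, \dots, x_m\rangle$; equivalently, one can use the simpler ``strand permutation plus linking'' bookkeeping, but the free-group action is the cleanest. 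The second ingredient is the map $e\colon \mathcal B_n \to \mathbb Z$ sending each $\sigma_i \mapsto 1$ (the exponent sum), which is a well-defined homomorphism since all defining relations are balanced. Note that $e(\alpha) = \ell - \ell'$ whenever $\widehat\alpha = \sigma_{i_1}^{\varepsilon_1}\cdots\sigma_{i_k}^{\varepsilon_k}$ has $\ell$ positive and $\ell'$ negative letters, so for a \emph{geodesic} $\widehat\alpha$ we get $\lg_\Sigma(\alpha) = \ell + \ell' = |e(\alpha)| + 2\min(\ell,\ell')$.

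The key step is a refinement: for $\alpha \in \mathcal B_n$ and a fixed generator $\sigma_j$, let $p_j$ (resp. $n_j$) be the number of occurrences of $\sigma_j$ (resp. $\sigma_j^{-1}$) in a given expression $\widehat\alpha$. I would introduce, for each $j$, a homomorphism recording how $\sigma_j$ ``moves strands'' — concretely, fix the partition of $\{1,\dots,n\}$ into $\{1,\dots,m\}$ and $\{m+1,\dots,n\}$ and consider the coset space; but more efficiently, I would use the covering-space / linking-number invariant $\lambda_j(\alpha) \in \mathbb Z$ defined as the total signed number of times strands starting in $\{1,\dots,m\}$ cross strands starting in $\{m+1,\dots,n\}$. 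Since $\alpha \in \mathcal B_m$ exactly means no strand starting in $\{m+1,\dots,n\}$ is ever involved in a crossing caused by $\alpha$, we have that $\alpha \in \mathcal B_m$ forces, for \emph{every} expression of $\alpha$, each letter $\sigma_j^{\pm 1}$ with $j \ge m$ to contribute nothing net — but this by itself does not yet rule out cancelling pairs. The honest approach is to project: the inclusion of generating sets $\{\sigma_1,\dots,\sigma_{m-1}\} \hookrightarrow \{\sigma_1,\dots,\sigma_{n-1}\}$ induces $\mathcal B_m \hookrightarrow \mathcal B_n$, and one must show a geodesic in $\mathcal B_n$ for $\alpha \in \mathcal B_m$ uses only $\sigma_1,\dots,\sigma_{m-1}$.

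I would argue this by a retraction argument. Define a set-map $\rho$ on words in $\{\sigma_1^{\pm1},\dots,\sigma_{n-1}^{\pm1}\}$ by deleting every letter $\sigma_j^{\pm1}$ with $j \ge m$. The crucial claim is that $\rho$ descends to a well-defined \emph{homomorphism} $\mathcal B_n \to \mathcal B_m$ restricting to the identity on $\mathcal B_m$ — this requires checking that $\rho$ respects each braid relation, which fails in general (e.g. $\sigma_{m-1}\sigma_m\sigma_{m-1} = \sigma_m\sigma_{m-1}\sigma_m$ maps to $\sigma_{m-1} = \sigma_{m-1}\cdot\text{(nothing)}\cdot\sigma_{m-1}$ on the left and nothing$\cdot\sigma_{m-1}\cdot$nothing on the right — actually these agree, both give $\sigma_{m-1}$; one must check all cases carefully, including $\sigma_m\sigma_{m+1}\sigma_m = \sigma_{m+1}\sigma_m\sigma_{m+1}$ which maps to the trivial relation). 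So I expect that $\rho$ \emph{does} define a retraction homomorphism, this being exactly the statement that $\mathcal B_m$ is a retract of $\mathcal B_n$ (compatible with the Coxeter-graph inclusion). Granting this, the proof concludes quickly: given $\alpha \in \mathcal B_m$ and a geodesic $\widehat\alpha$ of length $\ell$ in $\mathcal B_n$, the word $\rho(\widehat\alpha)$ represents $\rho(\alpha) = \alpha$ and has length $\le \ell$, hence is also geodesic and has length exactly $\ell$; but $\rho(\widehat\alpha)$ has length strictly less than $\ell$ unless $\widehat\alpha$ contains no letter $\sigma_j^{\pm1}$ with $j \ge m$, which is the desired conclusion. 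The main obstacle is verifying that $\rho$ is well-defined on $\mathcal B_n$, i.e. the case analysis over the braid relations involving $\sigma_{m-1}$, $\sigma_m$, $\sigma_{m+1}$; this is routine but must be done honestly, and it is exactly the point where the argument for general Artin groups (Theorem \ref{main}) becomes substantially harder, since there the analogous retraction need not exist and one must instead work with van Kampen diagrams or the word-reversing / $\mathrm{Sal}$ machinery hinted at by the notation in the preamble.
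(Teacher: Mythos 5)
Your argument hinges on the claim that the letter-deletion map $\rho$ (erasing every occurrence of $\sigma_j^{\pm 1}$ with $j \ge m$) descends to a homomorphism $\mathcal B_n \to \mathcal B_m$. It does not, and the relation you singled out is exactly the counterexample: $\rho(\sigma_{m-1}\sigma_m\sigma_{m-1}) = \sigma_{m-1}\sigma_{m-1} = \sigma_{m-1}^2$, whereas $\rho(\sigma_m\sigma_{m-1}\sigma_m) = \sigma_{m-1}$, and these differ since $\sigma_{m-1}$ has infinite order in $\mathcal B_m$. (In your check you dropped one of the two surviving letters on the left-hand side; the two sides do \emph{not} ``both give $\sigma_{m-1}$''.) So $\rho$ is not well defined whenever $2 \le m < n$, and the concluding paragraph of your proof has nothing to stand on. This is not a repairable slip within your framework: the one group retraction that genuinely exists in the relevant range, the exponent sum $\mathcal B_n \to \mathbb Z = \mathcal B_2$, sends $\sigma_j \mapsto \sigma_1$ rather than to $1$, so it is not the letter-deletion map and does not give the length-strictly-decreasing conclusion your final step needs. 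The earlier observations (Artin action on $F_n$, exponent sum, linking numbers) are, as you yourself note, not sufficient either.

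The fix --- and what the paper actually does --- is to delete \emph{strands} rather than \emph{letters}. Given $\alpha \in \mathcal B_m$ and a geodesic word $\widehat\alpha$ of length $\ell$, stack the canonical one-crossing diagrams to get a braid diagram $D$ of $\alpha$ with exactly $\ell$ crossings, then erase the strands $b_{m+1},\dots,b_n$ (each of which begins and ends at its own position since $\alpha \in \mathcal B_m$). The result is a diagram $D'$ of $\alpha$ viewed in $\mathcal B_m$, which yields a word in $\sigma_1,\dots,\sigma_{m-1}$ representing $\alpha$ whose length is the number of surviving crossings, hence at most $\ell$. Geodesity of $\widehat\alpha$ in $\mathcal B_n$ forces this number to equal $\ell$, so no crossing involves a deleted strand; and since the deleted strands are then never moved, they sit at positions $m+1,\dots,n$ throughout, forcing every letter of $\widehat\alpha$ to be some $\sigma_{i}^{\pm1}$ with $i \le m-1$. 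The essential point your proposal misses is that this ``retraction'' operates on diagrams representing a fixed group element, not on words modulo the defining relations, which is precisely how it escapes the well-definedness problem that sinks $\rho$. (The same philosophy drives the general case: the retraction of Theorem~\ref{GodPar} is a topological retraction of Salvetti complexes applied to a lifted path, not a group retraction $A \to A_T$.)
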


\begin{proof} 
Let $p_{1,3}: \mathbb R^3 \to \mathbb R^2$ denote the projection on the first and third coordinates.
Let $\beta = (b_1, \dots, b_n)$ be a braid.
We project each $b_i$ on the plane $\mathbb R^2$ via $p_{1,3}$ and, up to isotopy, we may suppose that these projections form only finitely many regular double crossings.
As usual, we indicate in each crossing which strand goes over the other. 
We obtain in this way a \emph{braid diagram} which represents the isotopy class of $\beta$. 
Recall also that each generator $\sigma_i$ has a ``canonical'' diagram with precisely one crossing.

Let $\widehat \alpha = \sigma_{i_1}^{\varepsilon_1} \cdots \sigma_{i_\ell}^{\varepsilon_\ell}$ be a word which represents an element $\alpha \in \mathcal B_n$.
By concatenating the canonical diagrams of the $\sigma_i^{\varepsilon_i}$'s, we obtain a diagram of $\alpha$ with precisely $\ell$ crossings.
Conversely, by applying standard methods, from a diagram of $\alpha$ with $\ell$ crossings, we can define a (non unique) word $\widehat \alpha = \sigma_{i_1}^{\varepsilon_1} \cdots \sigma_{i_\ell}^{\varepsilon_\ell}$ of length $\ell$ which represents $\alpha$.

Let $\alpha \in \mathcal B_m$, and let $\widehat \alpha = \sigma_{i_1}^{\varepsilon_1} \cdots \sigma_{i_\ell}^{\varepsilon_\ell}$ be a geodesic of $\alpha$ with respect to the generating set of $\mathcal B_n$. 
Let $D$ be the diagram of $\alpha$ obtained from $\widehat \alpha$.
By removing the strands $b_{m+1}, \dots, b_n$ from $D$, we obtain another diagram $D'$ of $\alpha$, but now viewed as an element of $\mathcal B_m$.  Since no new crossings are introduced by this procedure, $D'$ has at most  $\ell$ crossings.  But $\widehat \alpha$ was geodesic, so the number of crossings cannot be less than $\ell$.  
This means that the only strands involved in the crossings in $D$ are $b_1, \dots, b_m$, and therefore $\sigma_{i_1}, \dots, \sigma_{i_\ell} \in \{\sigma_1, \dots, \sigma_{m-1}\}$.
\end{proof}

We turn now to the proof of Theorem \ref{main}.
We fix a Coxeter graph $\Gamma$, and denote by $(W,S)$ its Coxeter system, and by $(A, \Sigma)$ its Artin system.

Let $X,Y$ be two subsets of $S$, and let $w$ be an element of $W$.
We say that $w$ is \emph{$(X,Y)$-minimal} if it is of minimal length among the elements of the double-coset $W_X w W_Y$.
The following will be implicitly used throughout the paper.

\begin{proposition} [Bourbaki \cite{Bourb1}] \label{minimal}  Let $X,Y$ be two subsets of $S$, and let $w \in W$. 
\begin{enumerate}
\item There exists a unique $(X,Y)$-minimal element lying in $W_X w W_Y$.
\item  The following are equivalent.
\begin{itemize}
\item $w$ is $(\emptyset,Y)$-minimal,
\item  $\lg_S (ws) > \lg_S (w)$ for all $s \in Y$,
\item  $\lg_S (wu) = \lg_S(w) + \lg_S(u)$ for all $u \in W_Y$.
\end{itemize}
\item The following are equivalent.
\begin{itemize}
\item  $w$ is $(X,\emptyset)$-minimal,
\item $\lg_S (sw) > \lg_S (w)$ for all $s \in X$, 
\item $\lg_S (uw) = \lg_S(u) + \lg_S(w)$ for all $u \in W_X$.
\end{itemize}
\end{enumerate}
\end{proposition}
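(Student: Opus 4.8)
The plan is to derive all three statements from the basic structure theory of the Coxeter system $(W,S)$ — the parity relation $\lg_S(ws)=\lg_S(w)\pm 1$ for $s\in S$, the Exchange Condition, and the Deletion Condition — together with the Bourbaki convexity theorem for Coxeter groups stated above, which guarantees that every $S$-reduced expression of an element of $W_Y$ (resp. $W_X$) uses only letters of $Y$ (resp. $X$). Part (1), existence, is immediate: the lengths of the elements of $W_X w W_Y$ form a nonempty set of nonnegative integers, and any element realizing its minimum lies in $W_X w W_Y$, hence is $(X,Y)$-minimal. The substantive work is the chain of equivalences in (2) — part (3) being its left--right mirror — and the uniqueness in part (1); I would carry them out in that order.

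For (2) I would establish the cycle ``$w$ is $(\emptyset,Y)$-minimal'' $\Rightarrow$ ``$\lg_S(wu)=\lg_S(w)+\lg_S(u)$ for all $u\in W_Y$'' $\Rightarrow$ ``$\lg_S(ws)>\lg_S(w)$ for all $s\in Y$'' $\Rightarrow$ ``$w$ is $(\emptyset,Y)$-minimal''. The second arrow is the case $u=s$. The third follows by contraposition: if $w$ is not of minimal length in $wW_Y$, write $w=w_0u_0$ with $w_0$ of minimal length in $wW_Y$ and $u_0\in W_Y\setminus\{1\}$; picking a right descent $s\in Y$ of $u_0$ and applying the (already proved) additivity for $w_0$ gives $\lg_S(ws)=\lg_S(w_0)+\lg_S(u_0s)=\lg_S(w)-1$. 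The heart of the matter is the first arrow, which I would prove by induction on $\lg_S(u)$, the case $u=1$ being trivial. For $\lg_S(u)\ge 1$, write $u=u's$ with $s\in Y$ and $\lg_S(u')=\lg_S(u)-1$. By the inductive hypothesis $\lg_S(wu')=\lg_S(w)+\lg_S(u')$, so the concatenation of an $S$-reduced expression of $w$ with an $S$-reduced expression of $u'$ (whose letters lie in $Y$, by convexity) is an $S$-reduced expression of $wu'$. Since $\lg_S(wu)=\lg_S(wu's)$, parity gives $\lg_S(wu)=\lg_S(wu')\pm 1$; if $+1$ we are done, so suppose $\lg_S(wu's)<\lg_S(wu')$. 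The Exchange Condition then lets us delete a single letter of the above expression and still represent $wu$. If the deleted letter lies in the $u'$-block, we get $wu=wu''$ with $u''\in W_Y$ and $\lg_S(u'')<\lg_S(u)$, whereas $u''=u$ — a contradiction. If it lies in the $w$-block, we get $wu=w'u'$, hence $w\cdot(u's(u')^{-1})=w'$ with $u's(u')^{-1}\in W_Y$ and $\lg_S(w')<\lg_S(w)$, contradicting that $w$ has minimal length in $wW_Y$. Thus $\lg_S(wu)=\lg_S(wu')+1=\lg_S(w)+\lg_S(u)$.

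For the uniqueness in (1): let $d$ and $d'$ be two $(X,Y)$-minimal elements of one double coset, so $\lg_S(d)=\lg_S(d')$, and write $d'=adb$ with $a\in W_X$ and $b\in W_Y$, choosing this factorization so that $\lg_S(a)+\lg_S(b)$ is minimal. I claim $\lg_S(d')=\lg_S(a)+\lg_S(d)+\lg_S(b)$; granting this, minimality of $\lg_S(d')$ in the double coset forces $\lg_S(a)=\lg_S(b)=0$, hence $d=d'$. If the claim failed, the concatenation of $S$-reduced expressions of $a$, $d$, $b$ would be a non-reduced expression of $d'$, so by the Deletion Condition two of its letters could be removed while still representing $d'$; since each of the three blocks is individually reduced, the two removed letters lie in two distinct blocks. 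Removing one from the $a$-block and one from the $d$-block (or, symmetrically, from the $d$-block and the $b$-block) produces an element of $W_Xd'W_Y=W_XwW_Y$ of length strictly less than $\lg_S(d)$, contradicting $(X,Y)$-minimality of $d$; removing one from the $a$-block and one from the $b$-block produces a factorization $d'=a'db'$ with $a'\in W_X$, $b'\in W_Y$ and $\lg_S(a')+\lg_S(b')<\lg_S(a)+\lg_S(b)$, contradicting the choice of factorization. This proves the claim, and with it the uniqueness.

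I expect the main obstacle to be the first arrow of part (2) — precisely, the Exchange-Condition bookkeeping that shows the only ``dangerous'' deletion, one taking a letter from the $w$-block, is exactly the one ruled out by $w$ having minimal length in $wW_Y$. Once that lemma is secured, part (3) is its left--right mirror and the uniqueness in (1) is a short Deletion-Condition argument resting on the device of minimizing $\lg_S(a)+\lg_S(b)$ over all factorizations $d'=adb$. The Exchange and Deletion Conditions are themselves classical facts about Coxeter systems — derivable, if one does not wish to take them as known, from the canonical geometric representation of $W$ and its root system $\Phi$ — and are independent of anything in the present paper.
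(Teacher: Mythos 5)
The paper does not prove this proposition at all: it is quoted from Bourbaki and used as a black box (``the following will be implicitly used throughout the paper''), so there is no in-paper argument to compare against. Your proof is correct and is essentially the classical Bourbaki/Tits argument: existence of a minimal double-coset representative by well-ordering; the cycle of implications in (2) with the induction on $\lg_S(u)$ and the Exchange Condition doing the real work (your case analysis --- deletion in the $u'$-block contradicts $\lg_S(u'')=\lg_S(u)$ after cancelling $w$, deletion in the $w$-block contradicts minimality of $w$ in $wW_Y$ since $u's(u')^{-1}\in W_Y$ --- is exactly right); part (3) by symmetry; and uniqueness in (1) via the Deletion Condition applied to a factorization $d'=adb$ minimizing $\lg_S(a)+\lg_S(b)$. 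Your appeal to the convexity of $W_Y$ (so that reduced expressions of elements of $W_Y$ use only letters of $Y$, and right descents of $u_0\in W_Y\setminus\{1\}$ can be taken in $Y$) is legitimate and non-circular, since that fact is proved in Bourbaki directly from the exchange condition, independently of the double-coset statements. The one spot where you are slightly terse --- that the two letters removed by the Deletion Condition must lie in distinct blocks --- does hold: if both lay in, say, the $a$-block, then cancelling $d$ and $b$ would exhibit an expression for $a$ of length $\lg_S(a)-2$, contradicting reducedness. I see no gap.
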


Now, set  $\mathcal S^f = \{X \subset S \mid W_X \text{ is finite}\}$.
Then the following can be easily proved using the previous proposition (see for instance \cite[Lemma 3.2]{Paris1}).

\begin{lemma} \label{poset}
Let $\preceq$ be the relation on $W \times \mathcal S^f$ defined as follows.
Set $(u,X) \preceq (v,Y)$ if the following three conditions hold,
\begin{enumerate}
\item $X \subset Y$, \item $v^{-1}u \in W_Y$, and \item $v^{-1}u$ is $(\emptyset, X)$-minimal.
\end{enumerate}
Then $\preceq$ is a partial order relation on $W \times \mathcal S^f$.
\end{lemma}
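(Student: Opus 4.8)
The plan is to verify directly that $\preceq$ is reflexive, antisymmetric, and transitive, leaning on Proposition~\ref{minimal} throughout. Reflexivity $(u,X) \preceq (u,X)$ is immediate: $X \subset X$, $u^{-1}u = 1 \in W_X$, and $1$ is trivially $(\emptyset,X)$-minimal.

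For antisymmetry, suppose $(u,X) \preceq (v,Y)$ and $(v,Y) \preceq (u,X)$. Condition (1) in both directions forces $X = Y$. Then $v^{-1}u \in W_Y$ and $v^{-1}u$ is $(\emptyset,X)$-minimal; but since $X = Y$ and $v^{-1}u \in W_X$, the $(\emptyset,X)$-minimal representative of the coset $(v^{-1}u)W_X = W_X$ is $1$, so $v^{-1}u = 1$, i.e. $u = v$. (Alternatively, by Proposition~\ref{minimal}(2), $\lg_S(v^{-1}u) = \lg_S(v^{-1}u \cdot u^{-1}v) - \lg_S(u^{-1}v) \le 0$ once one notes $u^{-1}v$ is also $(\emptyset,X)$-minimal, forcing $v^{-1}u \in W_X$ to have length $0$.)

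The substantive step is transitivity. Suppose $(u,X) \preceq (v,Y)$ and $(v,Y) \preceq (w,Z)$. Then $X \subset Y \subset Z$ gives condition (1), and $w^{-1}u = (w^{-1}v)(v^{-1}u) \in W_Z W_Y = W_Z$ gives condition (2). The remaining task is to show $w^{-1}u$ is $(\emptyset,X)$-minimal, i.e., using Proposition~\ref{minimal}(2), that $\lg_S((w^{-1}u)\,t) = \lg_S(w^{-1}u) + 1$ for every $t \in X$. The idea is to use the hypotheses to split lengths: $v^{-1}u$ being $(\emptyset,X)$-minimal means $\lg_S((v^{-1}u)\sigma) = \lg_S(v^{-1}u) + \lg_S(\sigma)$ for all $\sigma \in W_X$; one wants to combine this with the factorization $w^{-1}u = (w^{-1}v)(v^{-1}u)$. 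The point is that $w^{-1}v$ is $(\emptyset,Y)$-minimal and $v^{-1}u \in W_Y$, so by Proposition~\ref{minimal}(2) (third bullet) $\lg_S(w^{-1}u) = \lg_S(w^{-1}v) + \lg_S(v^{-1}u)$, and likewise $\lg_S((w^{-1}u)t) = \lg_S((w^{-1}v)(v^{-1}u)t) = \lg_S(w^{-1}v) + \lg_S((v^{-1}u)t)$ since $(v^{-1}u)t \in W_Y$ as well ($t \in X \subset Y$). Subtracting and applying $(\emptyset,X)$-minimality of $v^{-1}u$ gives $\lg_S((w^{-1}u)t) - \lg_S(w^{-1}u) = \lg_S((v^{-1}u)t) - \lg_S(v^{-1}u) = 1$, as desired.

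The main obstacle, such as it is, lies in correctly marshaling the three equivalent characterizations in Proposition~\ref{minimal}(2): one must be careful that the "additivity over $W_Y$" statement applies because $w^{-1}v$ is $(\emptyset,Y)$-minimal and both $v^{-1}u$ and $(v^{-1}u)t$ lie in $W_Y$, and that the reduction of $(\emptyset,X)$-minimality to the one-step length condition $\lg_S(x t) > \lg_S(x)$ for $t \in X$ is legitimate. Once these bookkeeping points are in place, the argument is a short chain of length identities; no deeper input is needed. I would present reflexivity and antisymmetry in a sentence each and devote the bulk of the write-up to the transitivity computation above.
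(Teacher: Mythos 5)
Your proof is correct and is exactly the verification the paper has in mind: the paper omits the argument, noting only that the lemma "can be easily proved using the previous proposition" (citing \cite[Lemma 3.2]{Paris1}), and your use of Proposition~\ref{minimal}(2) — additivity of length over $W_Y$ for the $(\emptyset,Y)$-minimal element $w^{-1}v$, combined with the one-step criterion for $(\emptyset,X)$-minimality — is the intended route. No gaps; the transitivity computation is the only nontrivial point and you handle it correctly.
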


Recall that the \emph{derived complex} of a partially ordered set $(E, \le)$ is defined to be the abstract simplicial complex made of the finite nonempty chains of $E$.
The \emph{Salvetti complex} of $\Gamma$, denoted by $\Sal (\Gamma)$, is defined to be the geometric realization of the derived complex of $(W \times \mathcal S^f, \preceq)$.
Note that the action of $W$ on $W \times \mathcal S^f$ defined by $w \cdot (u,X) = (wu,X)$ leaves invariant the order relation $\preceq$, hence it induces a ``natural'' (free and properly discontinuous) action of $W$ on $\Sal(\Gamma)$.
The quotient space will be denoted by $\BSal(\Gamma) = \Sal(\Gamma)/W$.

The spaces $\Sal(\Gamma)$ and $\BSal(\Gamma)$ admit cellular decompositions. Recall that a finite Coxeter group $W_X$ can be realized as an orthogonal reflection group acting on $\mathbb R^k, k={|X|}$.
The \emph{Coxeter cell} for $W_X$ is the convex hull of the orbit of a generic point $p$ in $\mathbb R^k$.  Viewed as a combinatorial object, it is independent of choice of the point $p$.  Its 1-skeleton is the Cayley graph of $(W_X,X)$.
 The cell decompositions of 
$\Sal(\Gamma)$ and $\BSal(\Gamma)$, which we now describe, are made up of Coxeter cells for the subgroups $W_X, X \in \mathcal S^f$.

For $(u,X) \in W \times \mathcal S^f$, we set
\[
C(u,X) = \{(v,Y) \in W \times \mathcal S^f \mid (v,Y) \preceq (u,X)\}\,,
\]
and we denote by $\mathbb B(u,X)$ the simplicial subcomplex of $\Sal(\Gamma)$ spanned by $C(u,X)$.
It is shown in \cite{Paris1} that $\mathbb B(u,X)$ is a closed disc of dimension $|X|$ for all $(u,X) \in W \times \mathcal S^f$;
in fact it is naturally isomorphic to the Coxeter cell for $W_X$.  The set $\{\mathbb B(u,X) \mid (u,X) \in W \times \mathcal S^f \}$ is a regular cellular decomposition of $\Sal(\Gamma)$ (see also \cite{Salve1, ChaDav1}).

Observe that every $w \in W \setminus \{1\}$ sends the closed disc $\mathbb B(u,X)$  homeomorphically onto $\mathbb B(wu,X)$, and that $\int (\mathbb B(u,X)) \cap \int(\mathbb B(wu,X)) = \emptyset$, for all $(u,X) \in W \times \mathcal S^f$.
It follows that the cellular decomposition of $\Sal(\Gamma)$ induces a cellular decomposition of the quotient $\BSal(\Gamma) = \Sal(\Gamma)/W$.  This decomposition has one cell for each 
$X \in \mathcal S^f$, corresponding to the orbit of $\mathbb B(1,X)$ under $W$. 
The closure of this cell will be denoted by $\bar{\mathbb B}(X)$.
Note that this cellular decomposition is not regular in general.

The $k$-skeletons of $\Sal(\Gamma)$ and $\BSal(\Gamma)$ for $k=0,1,2$ can be described as follows.

{\bf The 0-skeleton.}
Let $u \in W$.
Then $C(u,\emptyset) = \{(u,\emptyset)\}$, and $\mathbb B(u,\emptyset)$ is a vertex of $\Sal(\Gamma)$.
We denote it by $x(u)$.
So, the $0$-skeleton of $\Sal(\Gamma)$ is the set $\{x(u) \mid u \in W\}$, which is in one-to-one correspondence with $W$.
The $0$-skeleton of $\BSal(\Gamma)$ is reduced to a single point, $\bar{\mathbb B}(\emptyset)$, that we denote by $x_0$.

{\bf  The 1-skeleton.}
Let $u \in W$ and $s \in S$.
Then $C(u,\{s\}) = \{(u,\emptyset), (us, \emptyset), (u,\{s\})\}$, and $\mathbb B (u,\{s\})$ is an edge of $\Sal(\Gamma)$
connecting $x(u)$ to $x(us)$.
We denote it by $a(u,s)$ and orient it from $x(u)$ to $x(us)$.
Note that there is another edge connecting $x(u)$ to $x(us)$, namely $a(us,s)$, but this edge is oriented in the other direction (see Figure \ref{1-skeleton}).  Observe that the action of $W$ preserves orientations of edges.
Since all edges are of this form, we see that the 1-skeleton of $\Sal(\Gamma)$ is just the Cayley graph of $(W,S)$.  
 Descending to $\BSal(\Gamma)$, the 1-skeleton consists of a  loop at $x_0$ formed by the edge $\bar a_s =\bar{\mathbb B}(\{s\})$, for each $s \in S$ (see Figure \ref{1-skeleton}).

\begin{figure}[tbh]\label{1-skeleton}
\bigskip
\centerline{
\setlength{\unitlength}{0.5cm}
\begin{picture}(16,4)
\put(1,1){\includegraphics[width=7cm]{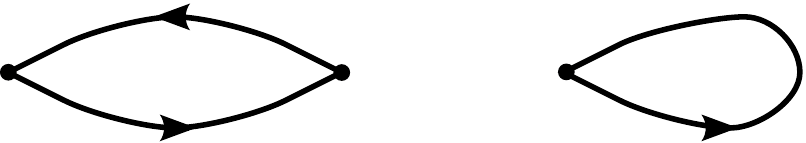}}
\put(0,1.2){\small $x(u)$}
\put(3,0.2){\small $a(u,s)$}
\put(3,3.6){\small $a(v,s)$}
\put(7,2.5){\small $x(v)$}
\put(7.5,1.8){\small $\shortparallel$}
\put(6.8,1){\small $x(us)$}
\put(10.3,1.4){\small $x_0$}
\put(14.9,1){\small $\bar a_s$}
\end{picture}} 

\bigskip
\caption{$1$-skeletons of $\Sal(\Gamma)$ and $\BSal(\Gamma)$.}
\end{figure}

{\bf The 2-skeleton.}
Let $s,t \in S$, $s \neq t$.
Set $X=\{s,t\}$.
First, notice that $X \in \mathcal S^f$ if and only if $m_{s,t} \neq \infty$.
Now, assume $m_{s,t} \neq \infty$, set $m=m_{s,t}$, and take $u \in W$.
Then $\mathbb B(u,X)$ is isomorphic to the Coxeter cell for $W_X$.  Namely, it is a $2m$-gon
with vertices $\{x(uw) \mid w \in W_X\}$.  
The boundary of $\mathbb B(u,X)$ is the loop
\[
a(u,s) a(us,t) \cdots a(u \Pi(s,t:m-1),r) a(u\Pi(t,s:m-1),r')^{-1} \cdots a(ut,s)^{-1} a(u,t)^{-1}
\]
where $r=t$ and $r'=s$ if $m$ is even, and $r=s$ and $r'=t$ if $m$ is odd 
(see Figure 3).  
Hence, $\bar{\mathbb B}(X)$ is a $2$-cell whose boundary is
\[
\bar a_s \bar a_t \cdots \bar a_r \bar a_{r'}^{-1} \cdots \bar a_s^{-1} \bar a_t^{-1} = \Pi(\bar a_s,\bar a_t:m)\Pi(\bar a_t,\bar a_s:m)^{-1}\,.
\]

\begin{figure}[tbh]
\label{2-skeleton}
\bigskip
\centerline{
\setlength{\unitlength}{0.5cm}
\begin{picture}(22,10)
\put(2,1.5){\includegraphics[width=10cm]{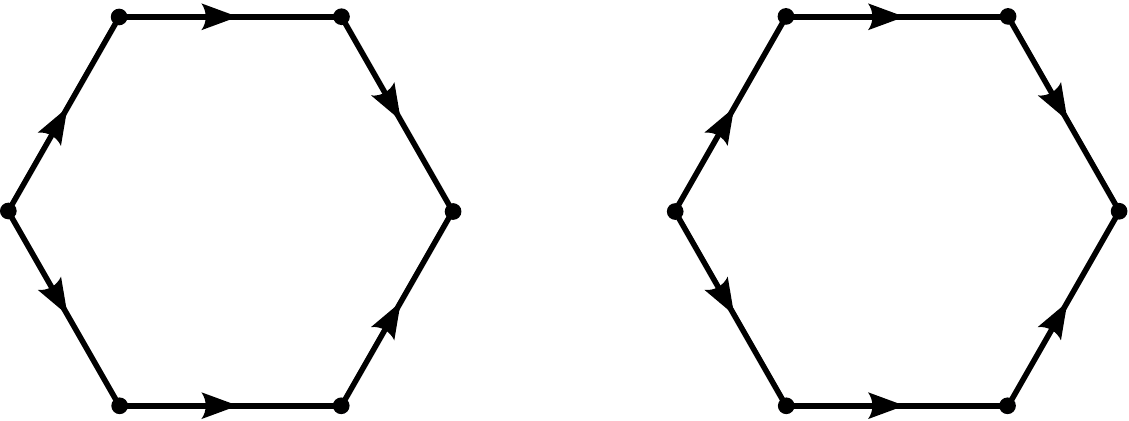}}
\put(0.7,3){\small $a(u,t)$}
\put(0.7,7){\small $a(u,s)$}
\put(4.8,0.7){\small $a(ut,s)$}
\put(4.8,9.1){\small $a(us,t)$}
\put(9.5,3){\small $a(uts,t)$}
\put(9.5,7){\small $a(ust,s)$}
\put(14,3){\small $\bar a_t$}
\put(14,7){\small $\bar a_s$}
\put(17.5,0.7){\small $\bar a_s$}
\put(17.5,9.1){\small $\bar a_t$}
\put(21.4,3){\small $\bar a_t$}
\put(21.4,7){\small $\bar a_s$}
\end{picture}} 

\bigskip
\caption{$2$-skeletons of $\Sal(\Gamma)$ and $\BSal(\Gamma)$.}
\end{figure}

From the above it follows that the $2$-skeleton of $\BSal(\Gamma)$ is precisely the Cayley $2$-complex associated with the standard presentation of $A$.

\begin{proposition} 
We have $\pi_1(\BSal(\Gamma),x_0) = A$, $\pi_1(\Sal(\Gamma),x(1)) = CA$, and the exact sequence associated to the regular covering $\Sal(\Gamma) \to \BSal(\Gamma)$ is 
\[
1 \longrightarrow CA \longrightarrow A \longrightarrow W \longrightarrow 1.
\]
\end{proposition}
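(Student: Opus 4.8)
The plan is to deduce all three assertions from two facts already in hand: the $2$-skeleton of $\BSal(\Gamma)$ is the Cayley $2$-complex of the standard presentation of $A$, and $p\colon \Sal(\Gamma) \to \BSal(\Gamma)$ is the quotient map of a free, properly discontinuous action of $W$.

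First I would compute $\pi_1(\BSal(\Gamma),x_0)$. Since the fundamental group of a CW complex depends only on its $2$-skeleton, and since the $1$-skeleton of $\BSal(\Gamma)$ is a wedge of circles $\{\bar a_s \mid s\in S\}$ to which one $2$-cell $\bar{\mathbb B}(\{s,t\})$ is attached along the loop $\Pi(\bar a_s,\bar a_t:m_{s,t})\,\Pi(\bar a_t,\bar a_s:m_{s,t})^{-1}$ for each pair $s\neq t$ with $m_{s,t}\neq\infty$, the van Kampen theorem yields the presentation
\[
\pi_1(\BSal(\Gamma),x_0)=\langle\,\bar a_s\ (s\in S)\ \mid\ \Pi(\bar a_s,\bar a_t:m_{s,t})=\Pi(\bar a_t,\bar a_s:m_{s,t})\,\rangle,
\]
which is exactly the Artin presentation; the isomorphism $A\to\pi_1(\BSal(\Gamma),x_0)$ sends $\sigma_s$ to $\bar a_s$.

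Next I would analyze the covering $p$. The space $\Sal(\Gamma)$ is connected, since its $1$-skeleton is the Cayley graph of $(W,S)$, and $W$ acts transitively on the fiber $p^{-1}(x_0)=\{x(u)\mid u\in W\}$ by $w\cdot x(u)=x(wu)$; hence $p$ is a regular covering with deck transformation group $W$. Covering space theory then provides a short exact sequence
\[
1\longrightarrow p_*\pi_1(\Sal(\Gamma),x(1))\longrightarrow\pi_1(\BSal(\Gamma),x_0)\stackrel{\rho}{\longrightarrow}W\longrightarrow1,
\]
with $p_*$ injective and $\rho$ the monodromy homomorphism based at the lift $x(1)$ of $x_0$. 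It remains to identify $\rho$ with $\theta$: the loop $\bar a_s$, lifted starting at $x(1)$, is the edge $a(1,s)$ from $x(1)$ to $x(s)$ (here one uses that the $W$-action preserves orientations of edges), so $\rho(\bar a_s)$ is the deck transformation carrying $x(1)$ to $x(s)$, namely left multiplication by $s$. Under the identification $\bar a_s\leftrightarrow\sigma_s$ this says precisely $\rho=\theta$. Therefore $p_*\pi_1(\Sal(\Gamma),x(1))=\ker\theta=CA$, and, $p_*$ being injective, $\pi_1(\Sal(\Gamma),x(1))\cong CA$ and the sequence becomes $1\to CA\to A\to W\to1$.

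The step I expect to require the most care is the last one: confirming that the surjection in the covering-space exact sequence is $\theta$ itself under the chosen identifications, and not some twist of it. This is purely a matter of bookkeeping with basepoints and with the orientation conventions on the edges $a(u,s)$ and $\bar a_s$ fixed earlier, but it must be carried out carefully, since getting it wrong would replace $\theta$ by an automorphism-twisted version and thereby misidentify the kernel.
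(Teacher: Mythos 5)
Your argument is correct and is precisely the standard reasoning the paper relies on: the paper states this proposition without proof, as an immediate consequence of the two facts you cite (the $2$-skeleton of $\BSal(\Gamma)$ being the Cayley $2$-complex of the standard presentation, and the free, properly discontinuous $W$-action on the connected complex $\Sal(\Gamma)$). Your write-up simply makes explicit the van Kampen computation and the identification of the monodromy with $\theta$ on generators, which is exactly the intended justification.
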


Now, assume we are given a subset $T$ of $S$, and set $\mathcal S_T^f = \{ X \in \mathcal S^f \mid X \subset T\}$.
Observe that the embedding $W_T \times \mathcal S_T^f \hookrightarrow W \times \mathcal S^f$ induces an embedding $\iota_T : \Sal(\Gamma_T) \hookrightarrow \Sal(\Gamma)$.
The following provides our main tool for proving Theorem \ref{main}.

\begin{theorem} [Godelle, Paris \cite{GodPar2}] \label{GodPar}
The embedding $\iota_T : \Sal(\Gamma_T) \hookrightarrow \Sal(\Gamma)$ admits a retraction $\pi_T: \Sal(\Gamma) \to \Sal(\Gamma_T)$.
\end{theorem}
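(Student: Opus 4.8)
The plan is to produce $\pi_T$ as the map induced by an order-preserving retraction of the indexing posets. Recall that $\Sal(\Gamma)$ is the geometric realization of the derived complex of $(W\times\mathcal S^f,\preceq)$ and that $\iota_T$ comes from the poset inclusion $W_T\times\mathcal S_T^f\hookrightarrow W\times\mathcal S^f$, whose image is the set of pairs $(u,X)$ with $u\in W_T$ and $X\subseteq T$. Any order-preserving map $\rho\colon(W\times\mathcal S^f,\preceq)\to(W_T\times\mathcal S_T^f,\preceq)$ sends chains to chains and therefore induces a simplicial (hence continuous) map $\pi_T=|\rho|$ on geometric realizations; if moreover $\rho$ restricts to the identity on $W_T\times\mathcal S_T^f$, then $\pi_T\circ\iota_T=|\rho\circ\iota_T|=\mathrm{id}$. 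So it suffices to build such a poset retraction, and I would do so in a way that is left $W_T$-equivariant, so that it descends to the base complexes if desired.

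For the first coordinate I would use Bourbaki's coset calculus \cite{Bourb1}. By Proposition \ref{minimal}(3), every $v\in W$ factors uniquely as $v=\lambda_T(v)\,\delta_T(v)$ with $\lambda_T(v)\in W_T$ and $\delta_T(v)$ the $(T,\emptyset)$-minimal representative of the coset $W_Tv$, with $\lg_S(v)=\lg_S(\lambda_T(v))+\lg_S(\delta_T(v))$. The assignment $v\mapsto\lambda_T(v)$ is left $W_T$-equivariant and fixes $W_T$ pointwise. The crux of the construction is the following statement, which I would isolate as a \emph{key lemma}: for every $(v,Y)\in W\times\mathcal S^f$, the image of the vertex set of the Coxeter cell $\mathbb B(v,Y)$ under $\lambda_T$, namely
\[
\{\lambda_T(vw)\mid w\in W_Y\}\subseteq W_T,
\]
is a left coset $\lambda_T(v)\,W_{Y'}$ of a (necessarily unique) spherical subgroup, with $Y'\in\mathcal S_T^f$. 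Granting this, I would define $\rho(v,Y)=(\lambda_T(v),Y')$.

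That $\rho$ fixes $W_T\times\mathcal S_T^f$ is immediate: if $v\in W_T$, then $\lambda_T(vw)=vw$ for all $w\in W_Y\subseteq W_T$, so the projected vertex set is $vW_Y$ and $(v,Y)\mapsto(v,Y)$. The remaining point is order-preservation. Given $(u,X)\preceq(v,Y)$ — so $X\subseteq Y$, $v^{-1}u\in W_Y$, and $v^{-1}u$ is $(\emptyset,X)$-minimal — I must verify the three conditions of Lemma \ref{poset} for $\rho(u,X)\preceq\rho(v,Y)$: that $X'\subseteq Y'$, that $\lambda_T(v)^{-1}\lambda_T(u)\in W_{Y'}$, and that this element is $(\emptyset,X')$-minimal. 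Geometrically this says that $\lambda_T$ carries the face $\mathbb B(u,X)$ of $\mathbb B(v,Y)$ onto a face of the image cell, compatibly with the face poset; granting the key lemma, this reduces to bookkeeping with Proposition \ref{minimal}, since $x(u)$ is one of the vertices $x(vw)$ and the coset $\lambda_T(v)W_{Y'}$ inherits the sub-coset structure indexed by $W_X$.

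I expect the key lemma to be the main obstacle: a priori there is no reason for the $\lambda_T$-image of the vertices of a $2m$-gon to span a single Coxeter cell, and this is exactly the combinatorial heart of the theorem. I would establish it by analyzing how right multiplication by $W_Y$ interacts with minimal representatives for $W_T\backslash W$ — equivalently, via the structure theory of double cosets $W_T\backslash W/W_Y$ (Proposition \ref{minimal}) together with the Kilmoyer--Howlett-type description of $W_T\cap d\,W_Y\,d^{-1}$ for the minimal double-coset representative $d$, which is the ``ribbon'' calculus relating conjugates of standard parabolic subgroups. This is the combinatorial shadow of the strand-removal map of Proposition \ref{braids}: projecting a cell either preserves it or collapses it onto a face, exactly as forgetting strands either keeps or erases a crossing. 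If a uniform proof of the key lemma proves unwieldy, I would first treat the case $|S\setminus T|=1$, where the projection deletes a single generator and the lemma is checkable by hand, and then reach the general statement by composing the resulting retractions along a chain $T\subseteq T_1\subseteq\cdots\subseteq S$.
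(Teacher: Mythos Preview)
The paper does not itself prove this theorem; it cites \cite{GodPar2} and records the poset-level formula $\pi_T(u,X)=(u_0,\,T\cap u_1Xu_1^{-1})$, where $u=u_0u_1$ with $u_1$ the $(T,\emptyset)$-minimal representative of $W_Tu$, and thereafter only uses the resulting description of $\pi_T$ on the $1$-skeleton (Lemma~\ref{projection}). Your plan heads in a genuinely different direction in two respects. First, your second coordinate is not the paper's: the key lemma pins down $Y'=T\cap dYd^{-1}$ with $d$ the $(T,Y)$-minimal representative of the \emph{double} coset $W_TvW_Y$ (this is precisely Kilmoyer's theorem), whereas the paper uses the $(T,\emptyset)$-minimal $u_1$ of the single coset $W_Tu$. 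For $W=\mathfrak S_4$, $T=\{s_1,s_2\}$, $(v,Y)=(s_3,\{s_2,s_3\})$ one has $u_1=s_3$, giving the paper's $X_0=\emptyset$, while $d=1$, giving your $Y'=\{s_2\}$. Second, the paper's poset map is \emph{not} order-preserving---in fact it does not even send every chain to a chain: in the same example the chain $(s_3s_2s_3,\emptyset)\prec(s_3s_2,\{s_3\})\prec(s_3,\{s_2,s_3\})$ has images $(s_2,\emptyset),\,(1,\{s_2\}),\,(1,\emptyset)$, and the first and last are incomparable. So the retraction of \cite{GodPar2} is not the naive geometric realization of that poset map; it is built cell-by-cell by other means, and the poset formula only tells you \emph{which} target cell each source cell lands in. Your route, by contrast, would yield an honest simplicial retraction of order complexes if the key lemma and the three $\preceq$-conditions go through; this buys a cleaner functorial picture at the price of the double-coset combinatorics. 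Do note, however, that the ``bookkeeping with Proposition~\ref{minimal}'' you defer---particularly the $(\emptyset,X')$-minimality of $\lambda_T(v)^{-1}\lambda_T(u)$---is exactly where the Howlett/Kilmoyer machinery must be invoked in earnest, and is the substantive content of your argument rather than a formality.
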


In the proof of Theorem \ref{main}, we will need the following explicit description of the map $\pi_T$.
Let $(u,X) \in W \times \mathcal S^f$.
Write $u=u_0u_1$, where $u_0 \in W_T$ and $u_1$ is $(T, \emptyset)$-minimal.
Set $X_0 = T \cap u_1 X u_1^{-1}$.
Note that, since $u_1 W_X u_1^{-1}$ is finite, $W_{X_0}$ is finite, hence $X_0 \in \mathcal S_T^f$.
Then we set $\pi_T(u,X) = (u_0,X_0)$.
It is proved in \cite{GodPar2} that the  map $\pi_T: W \times \mathcal S^f \to W_T \times \mathcal S_T^f$ induces a continuous map $\pi_T: \Sal(\Gamma) \to \Sal(\Gamma_T)$ which is a retraction of $\iota_T$.

Now, the following lemma follows from the above description.

\begin{lemma} \label{projection} 
 Let $u \in W$.
Write $u=u_0u_1$, where $u_0 \in W_T$ and $u_1$ is $(T, \emptyset)$-minimal.  Then
\begin{enumerate}
\item   $\pi_T(x(u)) = x(u_0)$.
\item Let $s \in S$ and set  $t = u_1 s u_1^{-1}$.  If $t \in T$, 
then $\pi_T(a(u,s)) = a(u_0,t)$. If $t \notin T$, then  $\pi_T(a(u,s)) = x(u_0)$.
\end{enumerate}
\end{lemma}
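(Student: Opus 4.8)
The plan is to unwind the explicit formula for $\pi_T$ on the poset $W \times \mathcal S^f$ given just above, and to track what it does to the cells $\mathbb B(u,\emptyset) = x(u)$ and $\mathbb B(u,\{s\}) = a(u,s)$. For part (1), I would simply apply the definition with $X = \emptyset$: writing $u = u_0 u_1$ with $u_0 \in W_T$ and $u_1$ being $(T,\emptyset)$-minimal, we get $X_0 = T \cap u_1 \emptyset u_1^{-1} = \emptyset$, so $\pi_T(u,\emptyset) = (u_0,\emptyset)$, which is the vertex $x(u_0)$. Since $\pi_T$ on the Salvetti complex is induced by the poset map, $\pi_T(x(u)) = x(u_0)$.

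For part (2), the edge $a(u,s)$ is the disc $\mathbb B(u,\{s\})$, whose vertices are $x(u)$ and $x(us)$, i.e.\ the cell corresponding to the chain $(u,\emptyset) \prec (u,\{s\})$ together with $(us,\emptyset) \prec (u,\{s\})$. I would compute $\pi_T(u,\{s\})$ from the formula: with $u = u_0 u_1$ as above, $X_0 = T \cap u_1\{s\}u_1^{-1} = T \cap \{t\}$ where $t = u_1 s u_1^{-1}$. There are two cases. If $t \in T$, then $X_0 = \{t\}$ and $\pi_T(u,\{s\}) = (u_0, \{t\})$, which is the edge $a(u_0,t)$ (a $1$-cell, since $\{t\}$ has size one). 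I should then check that the two endpoints map correctly: $\pi_T(x(u)) = x(u_0)$ by part (1), and $\pi_T(x(us)) = x((us)_0)$; here $us = u_0 u_1 s = u_0 (u_1 s)$, and since $u_1 s u_1^{-1} = t \in T$ we have $u_1 s = (u_1 s u_1^{-1}) u_1 = t u_1$, hence $us = (u_0 t) u_1$ with $u_0 t \in W_T$ and $u_1$ still $(T,\emptyset)$-minimal, so $(us)_0 = u_0 t$ and $\pi_T(x(us)) = x(u_0 t)$. Thus $\pi_T$ carries the edge $a(u,s)$ from $x(u_0)$ to $x(u_0 t)$, consistently with it being $a(u_0,t)$. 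If instead $t \notin T$, then $X_0 = \emptyset$, so $\pi_T(u,\{s\}) = (u_0,\emptyset) = x(u_0)$; the whole edge collapses to the vertex $x(u_0)$.

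The only genuine point requiring care — and the step I expect to be the main obstacle — is verifying that the map on the poset actually induces a \emph{continuous} (cellular) map on the geometric realizations that restricts to these formulas on cells, i.e.\ that $\pi_T$ is order-preserving and that it sends the closed cell $\mathbb B(u,X)$ onto $\mathbb B(\pi_T(u,X))$ in the expected combinatorial way. This is precisely the content of Theorem \ref{GodPar} and the construction of \cite{GodPar2} recalled above, so I would invoke that directly rather than re-prove it; granting it, parts (1) and (2) reduce to the bookkeeping in the two preceding paragraphs. A small subtlety worth a sentence is that when $t \notin T$ one must confirm the collapse is consistent with the images of the two endpoints both under part (1): $x(us)$ also maps to $x(u_0)$, since $us = u_0(u_1 s)$ and, $u_1 s u_1^{-1} = t \notin T$ forces $u_1 s$ to again have trivial $W_T$-part (as $u_1$ is $(T,\emptyset)$-minimal and right-multiplication by $s$ either keeps it minimal or, were it to decrease the $(T,\emptyset)$-minimal length, would put $t = u_1 s u_1^{-1}$ in $T$, a contradiction), so $(us)_0 = u_0$.
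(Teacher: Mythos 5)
Your proposal follows essentially the same route as the paper: apply the explicit poset formula for $\pi_T$ to the three elements $(u,\emptyset)$, $(us,\emptyset)$, $(u,\{s\})$ of $C(u,\{s\})$ and read off the image cell, relying on Theorem \ref{GodPar} for the fact that the poset map induces the retraction on geometric realizations. The computations in the case $t \in T$ (including $us = (u_0t)u_1$ with $u_0t \in W_T$) match the paper's proof exactly.

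The one point where you assert rather than prove is the parenthetical at the very end: that if $t = u_1su_1^{-1} \notin T$ then $u_1s$ is again $(T,\emptyset)$-minimal, so that $(us)_0 = u_0$ and $X_0 = T \cap \{t\} = \emptyset$. This is precisely the nontrivial content of the lemma, and your justification (``were it to decrease the $(T,\emptyset)$-minimal length, it would put $t$ in $T$'') restates the claim without proving it. The paper closes this via the folding condition: if $\lg_S(u_1s) > \lg_S(u_1)$ and $u_1s$ were not $(T,\emptyset)$-minimal, there would be $s_0 \in T$ with $\lg_S(s_0u_1s) < \lg_S(u_1s)$ while $\lg_S(s_0u_1) > \lg_S(u_1)$ (by minimality of $u_1$), and the folding condition then forces $s_0u_1s = u_1$, i.e.\ $t = u_1su_1^{-1} = s_0 \in T$, a contradiction; the case $\lg_S(u_1s) < \lg_S(u_1)$ is immediate. (Equivalently, this is Deodhar's lemma on minimal coset representatives.) Supplying this argument, or an explicit citation for it, is needed to complete your proof; everything else is in order.
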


\begin{proof} 
We leave Part (1) to the reader and turn to Part (2).
We take $u \in W$ and $s \in S$, and define $u_0,u_1$ and $t$ as in the Lemma.
Recall that $C(u,\{s\}) = \{(u, \emptyset), (us, \emptyset), (u,\{s\})\}$, and that $a(u,s)$ is the edge of $\Sal(\Gamma)$ spanned by $C(u,\{s\})$.

Assume first that $t \in T$.
We have $us = u_0 u_1 s = u_0 t u_1$, $u_0t \in W_T$ and $u_1$ is $(T,\emptyset)$-minimal, hence $\pi_T((u,\emptyset)) = (u_0,\emptyset)$, $\pi_T((us,\emptyset)) = (u_0t,\emptyset)$, and $\pi_T((u,\{s\})) = (u_0,\{t\})$.  Therefore, $\pi_T(a(u,s)) = a(u_0,t)$.

Assume now that $t \not \in T$.  We claim that $u_1s$ is $(T,\emptyset)$-minimal.
If $\lg_S(u_1s)< \lg_S(u_1)$, then this is clear since $u_1$ is $(T,\emptyset)$-minimal.
Suppose that $\lg_S(u_1s) > \lg_S(u_1)$.
If $u_1s$ were not $(T,\emptyset)$-minimal, then there would exist $s_0 \in T$ such that $\lg_S(s_0u_1s) < \lg_S(u_1s)$ and since $u_1$ is $(T,\emptyset)$-minimal,   $\lg_S(s_0u_1) > \lg_S(u_1)$.   By   the ``folding condition'' (see  \cite[Chap. II, Sec.~3]{Brown1}), this would imply that  $s_0  u_1 s = u_1$, that is, $t=u_1 s u_1^{-1} = s_0 \in T$, a contradiction.

Thus, if $t \not \in T$, then $us=u_0u_1s$, $u_0 \in W_T$, and $u_1s$ is $(T,\emptyset)$-minimal.
It follows that $\pi_T((u,\emptyset)) = \pi_T((us,\emptyset)) = \pi_T ((u,\{s\})) = (u_0, \emptyset)$, hence $\pi_T(a(u,s))= x(u_0)$.
\end{proof}

\begin{proof}[Proof of Theorem \ref{main}]
Let $\widehat \alpha = \sigma_{s_1}^{\varepsilon_1} \cdots \sigma_{s_\ell}^{\varepsilon_\ell} \in (\Sigma \sqcup \Sigma^{-1})^*$ be a word in the alphabet $\Sigma \sqcup \Sigma^{-1}$.
Set
\[
\bar \gamma (\widehat \alpha) = \bar a_{s_1}^{\varepsilon_1} \cdots \bar a_{s_\ell}^{\varepsilon_\ell}\,.
\]
This is a loop in $\BSal(\Gamma)$ based at $x_0$.
Note that, if $\alpha$ is the element of $A$ represented by the word $\widehat \alpha$, then $\alpha$ is the element of $A = \pi_1(\BSal(\Gamma), x_0)$ represented by the loop $\bar \gamma(\widehat \alpha)$.

Denote by $\gamma(\widehat \alpha)$ the lift of $\bar \gamma (\widehat \alpha)$ in $\Sal(\Gamma)$ with initial point $x(1)$.
The path $\gamma(\widehat \alpha)$ can be described as follows.
For $i \in \{0,1, \dots, \ell\}$ we set $u_i = s_1^{\varepsilon_1} s_2^{\varepsilon_2} \cdots s_i^{\varepsilon_i}=s_1s_2 \cdots s_i  \in W$.
For $i \in \{1, \dots, \ell\}$, we set $a_i=a(u_{i-1},s_i)$ if $\varepsilon_i = 1$ and $a_i=a(u_i,s_i)$ if $\varepsilon_i = -1$.
Then
\[
\gamma(\widehat \alpha) = a_1^{\varepsilon_1} \cdots a_\ell^{\varepsilon_\ell}\,.
\]

Let $i \in \{1, \dots, \ell\}$.
Write $u_i = v_iw_i$, where $v_i \in W_T$ and $w_i$ is $(T,\emptyset)$-minimal.  If $\varepsilon_i =1$, let $t_i = w_{i-1}s_iw_{i-1}^{-1}$.   If $\varepsilon_i =-1$,  let  $t_i = w_is_iw_i^{-1}$.  Define
\[
\tau_i = \begin{cases}  \sigma_{t_i}^{\varepsilon_i} &\textrm{if $t_i \in T$}\cr
	1 & \textrm{if $t_i \notin T$}
	\end{cases}
\]
$$\widehat \tau = \tau_1 \tau_2 \cdots \tau_\ell \in (\Sigma_T \sqcup \Sigma_T^{-1})^*.$$
Note that $\lg(\widehat \tau) \le \lg (\widehat \alpha)$.

By Lemma \ref{projection}, the projection of $\pi_T(a_i)$
in $\BSal(\Gamma_T)$ is the edge $\bar a_{t_i}$ if $t_i \in T$, and the vertex $x_0$ otherwise.
Thus the image of $\pi_T(\gamma(\widehat \alpha))$ in $\BSal(\Gamma_T)$ is the loop
corresponding to $\widehat \tau$.  In other words,  $\pi_T(\gamma(\widehat \alpha)) = \gamma (\widehat \tau )$.

{\bf Claim 1.}
{\it If $\widehat \alpha \in (\Sigma_T \sqcup \Sigma_T^{-1})^*$, then $\widehat \tau = \widehat \alpha$.}

{\it Proof of Claim 1.}
In this case we have $u_i \in W_T$, hence $v_i=u_i$ and $w_i=1$ for all $i \in \{0,1, \dots, \ell\}$.
It follows that $t_i=s_i \in T$, so $\tau_i = \sigma_{s_i}^{\varepsilon_i}$ for all $i$.
Hence,
\[
\widehat \tau = \tau_1 \tau_2 \cdots \tau_\ell = \sigma_{s_1}^{\varepsilon_1} \sigma_{s_2}^{\varepsilon_2} \cdots \sigma_{s_\ell}^{\varepsilon_\ell} = \widehat \alpha \,.
\]

{\bf Claim 2.}
{\it Suppose $\widehat \alpha \in (\Sigma \sqcup \Sigma^{-1})^*$ represents the element $\alpha \in A_T$. 
Then $\widehat \tau \in (\Sigma_T \sqcup \Sigma_T^{-1})^*$ also represents $\alpha$.}

{\it Proof of Claim 2.}
Choose a word $\widehat \alpha' \in (\Sigma_T \sqcup \Sigma_T^{-1})^*$ which represents $\alpha$.
By construction, $\bar \gamma (\widehat \alpha')$ and $\bar \gamma(\widehat \alpha)$ represent $\alpha$, hence they are homotopic in $\BSal(\Gamma)$ relative to $x_0$. 
So, $\gamma (\widehat \alpha')$ is homotopic to $\gamma(\widehat \alpha)$ relative to endpoints.
It follows that $\pi_T(\gamma (\widehat \alpha')) = \gamma (\widehat \alpha')$ is homotopic to $\pi_T (\gamma (\widehat \alpha)) = \gamma (\widehat\tau)$ in $\Sal(\Gamma_T)$.  
We conclude that $\widehat \tau$ and $\widehat \alpha'$ represent the same element of $A_T$, namely, $\alpha$.

{\bf Claim 3.}
{\it Let $\widehat \alpha \in (\Sigma \sqcup \Sigma^{-1})^*$.
If $\lg ( \widehat \tau ) = \lg (\widehat \alpha)$, then $\widehat \alpha \in (\Sigma_T \sqcup \Sigma_T^{-1})^*$.}

{\it Proof of Claim 3.}
In order to have $\lg(\widehat \tau ) = \lg (\widehat \alpha)$, we must have $t_i \in T$ for all $i \in \{1, \dots, \ell\}$.  We will  prove by induction on $i$ that $s_i$ also lies in  $T$ for all $i $.

Let $i=1$.
Suppose first that $\varepsilon_1=1$.  Then $u_0=w_0=1$, so $s_1=w_0s_1w_0^{-1}=t_1 \in T$.
Suppose now that $\varepsilon_1=-1$.
If we had $s_1 \not \in T$, then $s_1$ would be $(T,\emptyset)$-minimal and we would have $u_1=w_1=s_1$.  
In this case, $s_1= w_1 s_1 w_1^{-1} = t_1 \in T$, a contradiction.
So, $s_1 \in T$.

Now assume that $i \ge 2$.  By induction,
we have $u_{i-1} = s_1 \cdots s_{i-1} \in W_T$.
Suppose first that $\varepsilon_i=1$.
Since $u_{i-1} \in W_T$, we have $v_{i-1} = u_{i-1}$ and $w_{i-1} = 1$, hence $s_i = t_i \in T$.
Suppose now that $\varepsilon_i=-1$.
If we had $s_i \not\in T$, then we would have $v_i =u_{i-1}$ and $w_i=s_i$, which would imply that $s_i = t_i \in T$, a contradiction.
So, $s_i \in T$.

We can now complete  the proof of the theorem.
Let $\alpha \in A_T$, and let $\widehat \alpha \in (\Sigma \sqcup \Sigma^{-1})^*$ be a geodesic for $\alpha$.
By Claim~2, $\widehat \tau$ is also an expression for $\alpha$.
By construction, $\lg (\widehat \tau) \le \lg (\widehat \alpha)$, hence $\lg (\widehat \tau) = \lg (\widehat \alpha)$ since $\widehat \alpha$ is a geodesic.
By Claim 3 we conclude that $\widehat \alpha \in (\Sigma_T \sqcup \Sigma_T^{-1})^*$.
\end{proof}

%%%%%%%%%%

\bigskip
\bigskip\noindent {\bf Ruth Charney,}

\smallskip\noindent
Department of Mathematics, MS 050, Brandeis University, 415 South Street, Waltham, MA 02453, USA.

\smallskip\noindent
E-mail: {\tt charney@brandeis.edu}

\bigskip\noindent {\bf Luis Paris,}

\smallskip\noindent 
Universit\'e de Bourgogne, Institut de Math\'ematiques de Bourgogne, UMR 5584 du CNRS, B.P. 47870, 21078 Dijon cedex, France.

\smallskip\noindent
E-mail: {\tt lparis@u-bourgogne.fr}

%%%%%%%%%%

%%%%%%%%
\end{document}